\theoremstyle{plain}
\newtheorem{theorem}{Theorem}[section]
\newtheorem{proposition}[theorem]{Proposition}
\theoremstyle{definition}
\newcommand{\R}{\mathbb{R}}
\newcommand{\C}{\mathbb{C}}
\newcommand{\bc}{\mathbb{C}}
\newcommand{\bR}{{\mathbb R}}
\def\new#1{\textsl{\textbf{{#1}}}}
\def\H{\mathbb{H}}
\def\<{\langle}
\def\>{\rangle}
\def\hh#1{{{\bf H}^{#1}_{\H}}}
\def\P{\mathbb P}
\begin{document}
\title
        {Quaternionic hyperbolic Fuchsian groups}
        \author{Joonhyung Kim}

\address{Joonhyung Kim \\
    Department of Mathematics, Konkuk University\\
           1 Hwayang-dong, Gwangjin-gu\\
           Seoul 143-701, Republic of Korea}
\email{calvary\char`\@snu.ac.kr}%
        \date{}
        \maketitle

\begin{abstract}
In this paper we give the characterization of Fuchsian groups acting on quaternionic hyperbolic $2$-space.
\end{abstract}
\footnotetext[1]{2000 {\sl{Mathematics Subject Classification.}}
20H10, 30F35, 30F40, 57S30.} \footnotetext[2]{{\sl{Key words and phrases.}}
Quaternionic hyperbolic space, Quaternionic hyperbolic Fuchsian group, Quaternionic Cartan angular invariant.}
\footnotetext[3]{\sl{The author was supported by NRF
grant 2011-0001565.}}

\section{Introduction}
The following theorem is found by B.Maskit in \cite{Ma}(p108).
\begin{theorem}
Let $G \subset \textbf{SL}(2,\bc)$ be a non-elementary Kleinian group in which $tr^2(g) \geq 0$ for all $g \in G$. Then $G$ is Fuchsian.
\end{theorem}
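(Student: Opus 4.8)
The plan is to first unwind the hypothesis. Writing $\operatorname{tr}(g) = a + bi$ with $a,b \in \R$, one has $\operatorname{tr}^2(g) = (a^2 - b^2) + 2abi$, so the requirement $\operatorname{tr}^2(g) \geq 0$ forces $2ab = 0$ together with $a^2 - b^2 \geq 0$; the only possibility is $b = 0$. Hence the condition is \emph{equivalent} to saying that every $g \in G$ has real trace. In particular $G$ contains no strictly loxodromic element, and every non-identity element is elliptic, parabolic, or hyperbolic. The goal is then to exhibit a single element of $\textbf{SL}(2,\bc)$ conjugating $G$ into $\textbf{SL}(2,\R)$, since a discrete subgroup of $\textbf{PSL}(2,\R)$ is precisely a Fuchsian group. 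Note that the $\textbf{SU}(2)$ alternative is excluded at the outset because that group is compact, so a discrete subgroup would be finite, hence elementary.

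Because $G$ is non-elementary it contains a hyperbolic element $g$, and after a first conjugation I may assume $g = \begin{pmatrix} \lambda & 0 \\ 0 & \lambda^{-1}\end{pmatrix}$. Its trace $\lambda + \lambda^{-1}$ is real and $|\lambda| \neq 1$, so $\lambda \in \R$. For an arbitrary $h = \begin{pmatrix} a & b \\ c & d \end{pmatrix} \in G$, the two real quantities $\operatorname{tr}(h) = a + d$ and $\operatorname{tr}(gh) = \lambda a + \lambda^{-1} d$ form a linear system whose coefficient matrix $\begin{pmatrix} 1 & 1 \\ \lambda & \lambda^{-1}\end{pmatrix}$ is real with nonzero determinant $\lambda^{-1} - \lambda$. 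This forces both $a$ and $d$ to be real. Thus every element of $G$ has real diagonal entries, and since $\det = 1$ we also get $bc = ad - 1 \in \R$ for each element.

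Next I normalize the off-diagonal part. Non-elementariness provides a hyperbolic $h_0 \in G$ whose fixed points differ from $0$ and $\infty$, so its off-diagonal entries are nonzero; conjugating by a diagonal matrix $\operatorname{diag}(e^{i\theta}, e^{-i\theta})$ (which fixes $g$ and leaves all diagonal entries real) and exploiting $bc \in \R$, I can arrange that $h_0$ has all four entries real and both off-diagonal entries nonzero. Now $g$ and $h_0$ both lie in $\textbf{SL}(2,\R)$. For a general $k = \begin{pmatrix} p & q \\ r & s\end{pmatrix} \in G$, with $p,s$ already real, I record the two real traces $\operatorname{tr}(h_0 k)$ and $\operatorname{tr}\bigl((g h_0 g^{-1}) k\bigr)$. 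Writing $h_0 = \begin{pmatrix} a & b \\ c & d\end{pmatrix}$ and subtracting the real diagonal contributions leaves $br + cq \in \R$ and $\lambda^2 br + \lambda^{-2} cq \in \R$; the coefficient matrix $\begin{pmatrix} 1 & 1 \\ \lambda^2 & \lambda^{-2}\end{pmatrix}$ again has nonzero real determinant $\lambda^{-2} - \lambda^2$, so $br$ and $cq$ are real, and hence $q, r \in \R$. Elements fixing $0$ or $\infty$ are treated identically using just the single surviving off-diagonal term of $h_0$.

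Therefore, after this fixed conjugation, $k \in \textbf{SL}(2,\R)$ for every $k \in G$, so $G$ is a discrete subgroup of $\textbf{PSL}(2,\R)$, i.e. Fuchsian. The step I expect to be the main obstacle is the off-diagonal normalization: one must confirm that a hyperbolic $h_0$ with nonzero off-diagonal entries exists, that it can be made entirely real while keeping $g$ diagonal, and that the two trace relations coming from $h_0$ and its conjugate $g h_0 g^{-1}$ are genuinely independent. Once that independence, encoded in $\lambda^2 - \lambda^{-2} \neq 0$, is secured, reality of all matrix entries propagates to the entire group and the conclusion follows.
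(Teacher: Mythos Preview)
Your argument is correct. Note, however, that the paper does not supply its own proof of this statement: Theorem~1.1 is quoted from Maskit~\cite{Ma} as background, and the paper's Section~3 proves only the quaternionic analogue (Theorem~1.3). That said, your proof is precisely the $\textbf{SL}(2,\C)$ template that the paper follows in the $\textbf{Sp}(2,1)$ setting: normalize one hyperbolic element to diagonal form with real eigenvalues, use the real traces of $h$, $gh$, $g^{-1}h$ to force all diagonal entries real, then bring in a second loxodromic element with no shared fixed point and exploit traces of products (and of conjugates by $g$) to obtain two independent real linear relations on the off-diagonal entries. The paper's quaternionic proof requires extra work at both stages---showing $\mu,\nu\in\{\pm 1\}$ via $\operatorname{tr}(A^2)$ and $\operatorname{tr}(A^4)$, and then splitting into the $\H$-line versus $\H^2_{\R}$ cases using the Cartan angular invariant---but the skeleton is the same as yours. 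So while there is no proof in the paper to compare against directly, your write-up matches the spirit and method of the paper's main argument, specialized to the classical case.
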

Here, we say that a Kleinian group is elementary if its limit set consists of at most two points, and the others are called non-elementary. One well-known fact is that a non-elementary Kleinian group contains at least two loxodromic elements with distinct fixed points. Furthermore, a Kleinian group is called Fuchsian if it keeps invariant a disk in Riemann sphere. This theorem gives a sufficient condition for a Kleinian group to be Fuchsian group in the group of holomorphic isometries in real hyperbolic plane. After that, B.Xie proved similar result in complex hyperbolic case in \cite{Xie}. In complex hyperbolic space, there is no totally geodesic submanifold of codimension one, but there are two kinds of totally geodesic submanifolds of codimension two. Hence there are two kinds of Fuchsian groups, called $\mathbb{C}$-Fuchsian and $\bR$-Fuchsian. Therefore we say a Kleinian group $G$ in the group of holomorphic isometries of complex hyperbolic space is Fuchsian if $G$ is either $\mathbb{C}$-Fuchsian or $\bR$-Fuchsian. The following is the B.Xie's result.
\begin{theorem}
Let $G \subset \textbf{SU}(2,1)$ be a non-elementary complex hyperbolic Kleinian group containing hyperbolic elements. Then $G$ is Fuchsian if and only if every element of $G$ has real trace.
\end{theorem}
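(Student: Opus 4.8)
The plan is to prove both implications, treating the forward (necessity) direction as essentially routine and concentrating effort on the converse, which is the complex-hyperbolic analogue of Maskit's argument. For necessity I would suppose $G$ is Fuchsian and first conjugate in $\textbf{SU}(2,1)$ so that the invariant totally geodesic subspace sits in standard position. If $G$ is $\bR$-Fuchsian it then lies in the real form $SO(2,1)\subset\textbf{SU}(2,1)$, whose matrices are real, so every trace is automatically real; if $G$ is $\mathbb{C}$-Fuchsian it stabilizes the standard complex line and is carried into the block group $S(U(1,1)\times U(1))$, where a direct eigenvalue computation shows the trace is real. This disposes of the easy direction.

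For the converse I would assume every $g\in G$ has real trace and begin by recording the decisive local fact. If $g$ is loxodromic its eigenvalues are $\lambda=\rho e^{i\theta}$, $\bar\lambda^{-1}=\rho^{-1}e^{i\theta}$ and $u=\bar\lambda/\lambda=e^{-2i\theta}$ with $\rho>1$, so that $\mathrm{tr}(g)=(\rho+\rho^{-1})e^{i\theta}+e^{-2i\theta}$ and
\[
\operatorname{Im}\mathrm{tr}(g)=\sin\theta\,\bigl(\rho+\rho^{-1}-2\cos\theta\bigr).
\]
Since $\rho+\rho^{-1}>2\ge 2\cos\theta$, reality of the trace forces $\sin\theta=0$, i.e. $\theta\in\{0,\pi\}$ and all three eigenvalues are real. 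Hence every loxodromic element of $G$ is $\bR$-diagonalizable with real spectrum and has a well-defined pair of fixed points on $\partial\ch{2}$.

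Next I would exploit non-elementarity: $G$ contains two loxodromic elements $g_1,g_2$ with disjoint fixed-point sets, furnishing four distinct boundary points. After conjugating $g_1$ into real diagonal form, I would apply the real-trace hypothesis to the whole family $g_1^{\,n}g_2$, $n\in\mathbb{Z}$; expanding $\mathrm{tr}(g_1^{\,n}g_2)$ in the real eigenbasis of $g_1$ yields, for each $n$, the vanishing of an explicit imaginary part, and letting $n$ vary annihilates the off-diagonal phases of $g_2$. The clean way to package these relations is through the Cartan angular invariant $\mathbb{A}\in[-\tfrac{\pi}{2},\tfrac{\pi}{2}]$ of triples of fixed points: the accumulated trace conditions should force $\mathbb{A}$ to equal either $\pm\tfrac{\pi}{2}$ (the three points lie on a chain) or $0$ (they lie on an $\bR$-circle), these being exactly the boundaries of a complex line and of a Lagrangian $\bR$-plane.

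Finally I would convert this dichotomy into a global statement. If some triple of fixed points has $\mathbb{A}=\pm\tfrac{\pi}{2}$ it determines a unique complex line $\Sigma$, and I would argue that every element of $G$ must preserve $\Sigma$, since an element moving $\Sigma$ would produce a new fixed point whose invariant with two old ones is non-special, contradicting the trace hypothesis; thus $G$ is $\mathbb{C}$-Fuchsian. The case $\mathbb{A}=0$ symmetrically yields an invariant Lagrangian plane and an $\bR$-Fuchsian group. I expect the main obstacle to be precisely this promotion from a triple-wise statement to a single $G$-invariant subspace: one must rule out mixed configurations and show the special subspace singled out by one triple is imposed on the entire group. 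Here the abundance of loxodromic elements with distinct axes, together with the rigidity of the Cartan invariant, should do the work, just as irreducibility does in Maskit's original $SL(2,\bc)$ argument.
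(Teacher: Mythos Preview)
The paper does not prove this theorem: it is Xie's result, quoted from \cite{Xie} as motivation, and the paper's Section~3 proves only the quaternionic analogue (Theorem~1.3). So there is no proof here to compare your proposal against directly. Your converse strategy is nonetheless close in spirit to what the paper does for $\textbf{Sp}(2,1)$: normalize a loxodromic element to real diagonal form via the trace hypothesis on its powers, push products $A^{n}B$ through the real-trace condition to constrain the entries of a second element $B$, and then use the Cartan angular invariant on a triple of boundary fixed points to decide which type of totally geodesic subspace is singled out. The paper finishes that argument with explicit entry-by-entry bookkeeping rather than your more conceptual dichotomy, but the skeleton is the same.

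There is, however, a genuine error in your forward direction. Your claim that for a $\mathbb{C}$-Fuchsian group ``a direct eigenvalue computation shows the trace is real'' is false as stated. A loxodromic element of $S(U(1,1)\times U(1))$ stabilizing the standard complex line has eigenvalues $\rho e^{i\phi}$, $\rho^{-1}e^{i\phi}$, $e^{-2i\phi}$ and hence trace $(\rho+\rho^{-1})e^{i\phi}+e^{-2i\phi}$, which is not real for generic $\phi$; this is exactly the computation you yourself invoke in the converse direction to force $\phi\in\{0,\pi\}$. A $\mathbb{C}$-Fuchsian group may perfectly well contain loxodromics with nontrivial rotational part about the invariant complex line, and those have non-real trace. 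So the ``only if'' half, at least in the $\mathbb{C}$-Fuchsian case, cannot be dispatched by the routine check you propose; you should consult Xie's original paper to see precisely what is claimed and how that direction is handled there.
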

Here, $\textbf{SU}(2,1)$ is the group of holomorphic isometries of complex hyperbolic $2$-space.\\
In this notes, we show that similar result holds in quaternionic hyperbolic case. The main theorem is the following.
\begin{theorem}
Let $G \subset \textbf{Sp}(2,1)$ be a non-elementary quaternionic hyperbolic Kleinian group in which every element of $G$ has a real trace. Then $G$ is Fuchsian.
\end{theorem}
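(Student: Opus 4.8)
The plan is to transpose the argument Xie used in the complex hyperbolic case (Theorem 1.2) to $\textbf{Sp}(2,1)$, replacing the complex Cartan angular invariant by its quaternionic counterpart. Since $G$ is non-elementary it contains two loxodromic elements $A$ and $B$ whose fixed point pairs on $\partial\hh{2}$ are disjoint, and it is enough to control the pair $(A,B)$ as $B$ runs over a generating set. Choosing coordinates in which $\textbf{Sp}(2,1)$ preserves the Hermitian form $\langle z,w\rangle=\bar z_1w_3+\bar z_2w_2+\bar z_3w_1$ and placing the two fixed points of $A$ at $[1:0:0]$ and $[0:0:1]$, the element $A$ takes the diagonal form $A=\mathrm{diag}(\lambda,\mu,\bar\lambda^{-1})$ with $|\lambda|=r>1$ and $|\mu|=1$. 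Writing $\lambda=rv$ with $v$ a unit quaternion, one finds $\mathrm{tr}(A)=(r+r^{-1})v+\mu$, so the requirement $\mathrm{tr}(A)\in\bR$ already forces $(r+r^{-1})\,\mathrm{Im}(v)+\mathrm{Im}(\mu)=0$, a first reality constraint on the rotational part of $A$.

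The engine of the proof is a trace-iteration estimate carried out in this normal form. With $A$ diagonal, $\mathrm{tr}(A^nB)=\lambda^nB_{11}+\mu^nB_{22}+\bar\lambda^{-n}B_{33}$, where the three terms have moduli $r^n|B_{11}|$, $|B_{22}|$ and $r^{-n}|B_{33}|$. Demanding that $\mathrm{tr}(A^nB)\in\bR$ for all $n$ and letting $n\to\infty$ forces the dominant term $\lambda^nB_{11}=r^nv^nB_{11}$ to be real for all large $n$, and this splits into two cases. If $v\notin\bR$ (the rotational part of $A$ is nontrivial), then $v^n$ cannot remain in a fixed real line and one is forced to $B_{11}=0$, and symmetrically $B_{33}=0$; if $v\in\bR$, so $v=\pm1$, then $B_{11}$ and $B_{33}$ must themselves be real. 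Feeding in further relations from $\mathrm{tr}(A^nBA^mB)$ and $\mathrm{tr}(BA^nB^{-1})$ then pins down the remaining entries, and the two cases are exactly the quaternionic shadows of the $\bR$-Fuchsian and $\C$-Fuchsian dichotomy: every element of $G$ is confined to a common real, complex, or quaternionic-line structure.

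The main obstacle is that this normalization is not as innocent as in the complex case. Over $\C$ the trace is conjugation invariant, but over $\H$ only $\mathrm{Re}\,\mathrm{tr}$ is invariant — indeed already $\mathrm{diag}(i,i)$ and $\mathrm{diag}(i,j)$ are conjugate with traces $2i$ and $i+j$ of different modulus — so conjugating $A$ to diagonal form does not preserve the hypothesis that every trace is real, and Xie's argument cannot be transcribed verbatim. I would deal with this by expressing the reality conditions through $\textbf{Sp}(2,1)$-invariant data: the fixed points, the translation length $r$, and the quaternionic Cartan angular invariant $\mathbb{A}$ of triples of fixed points, all of which transform covariantly and convert the frame-dependent constraints into genuine invariants of the configuration. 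Concretely, the vanishing or extremal value of $\mathbb{A}$ on every triple of loxodromic fixed points is equivalent to the reality constraints extracted in the previous paragraph, and it is here that non-commutativity makes the analysis strictly harder than its complex model.

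Finally, because the fixed points of loxodromic elements are dense in the limit set of the non-elementary group $G$ and $G$ permutes its loxodromic elements by conjugation, the constraint on $\mathbb{A}$ propagates to the whole limit set: all these fixed points lie on a single totally geodesic submanifold $\Sigma\subset\hh{2}$, and each $g\in G$ carries the fixed-point set of $G$ to itself and hence preserves the totally geodesic hull $\Sigma$ of that set. Therefore $G$ stabilises $\Sigma$ and is Fuchsian, as claimed.
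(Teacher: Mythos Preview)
Your outline has the right architecture---normalize a loxodromic $A$, exploit trace relations to constrain an arbitrary $B$, then split into cases---and you correctly flag that non-invariance of the quaternionic trace under conjugation is a genuine subtlety. But what you have written is a strategy rather than a proof, and the central case split is misplaced.

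The dichotomy you set up, $v\in\bR$ versus $v\notin\bR$, does not correspond to the two Fuchsian types. The branch $v\notin\bR$ is in fact empty: your own asymptotic forces $B_{11}=0$ for every $B\in G$, and $B=I$ gives $1=0$. So that step only recovers the preliminary fact $A=\mathrm{diag}(\lambda,\pm1,\lambda^{-1})$ with $\lambda>1$ real; the paper reaches the same conclusion via the finite system $\mathrm{tr}(A),\mathrm{tr}(A^2),\mathrm{tr}(A^4)\in\bR$. The genuine bifurcation into Fuchsian types comes later and from a different source. After $\mathrm{tr}(B),\mathrm{tr}(AB),\mathrm{tr}(A^{-1}B)\in\bR$ force the diagonal entries $a,e,l$ of $B$ to be real, the substantive work is to show that the corner entries $c=B_{13}$ and $g=B_{31}$ are either both real or both purely imaginary. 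The paper extracts this from the diagonals of $B^{2}$, $BAB^{-1}$ and $B^{2}AB^{-1}$, combined with the $\textbf{Sp}(2,1)$ identities and the elementary quaternion lemma that $ab,\,a\bar b\in\bR$ forces $a,b$ to be both real or both purely imaginary. The purely-imaginary branch then yields $b=d=f=h=0$ and $G$ preserves a quaternionic line; the real branch yields $B\in\textbf{SO}(2,1)$ and $G$ preserves $\mathbb{H}^2_{\bR}$. None of this is covered by ``feeding in further relations from $\mathrm{tr}(A^nBA^mB)$''; these computations are the proof, and traces of products alone do not separate the two branches.

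On the conjugation issue: the observation is valid, but your proposed fix is not carried out. The claim that vanishing or extremal $\mathbb{A}_{\H}$ on every triple of loxodromic fixed points is \emph{equivalent} to the matrix reality constraints is asserted, not proved, and the passage from a triple-wise condition to a single global invariant submanifold is not justified (triples each lying on some $\bR$-circle, or some $\H$-line, do not automatically lie on a common one). The paper's use of the Cartan invariant is far more modest: it appears only once, in the real branch, to place the three specific points $0,\infty,B(0)$ on an $\bR$-circle, after which every remaining element of $G$ is checked by direct matrix computation to lie in $\textbf{SO}(2,1)$.
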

The rest of this paper is organized as follows. In $\S$2, we give some necessary preliminaries on quaternionic hyperbolic space and in $\S$3, we prove the main theorem.

\section{Preliminaries}
For real hyperbolic case, we recommend Maskit's book \cite{Ma}, and for the complex hyperbolic case, B. Xie's paper \cite{Xie} is enough and for more details, we recommend \cite{Go}.
\subsection{Quaternionic hyperbolic space}
Let $\H^{2,1}$ be a quaternionic vector space of dimension $3$ with a
Hermitian form of signature $(2,1)$. An element of $\H^{2,1}$ is a
column vector $p=(p_1,p_2,p_3)^t$. Throughout this paper, we choose
the second Hermitian form on $\H^{2,1}$ given by the matrix $J$
$$
J=\left[\begin{matrix} 0 & 0 & 1 \\ 0 & 1 & 0 \\ 1 & 0 & 0
\end{matrix}\right].
$$
Thus $\<p,q\>=q^*Jp=\overline{q}^tJp=\overline{q_1}p_3+\overline{q_2}p_2+\overline{q_3}p_1$,
where $p=(p_1,p_2,p_3)^t, q=(q_1,q_2,q_3)^t \in \H^{2,1}$.\\
One model of a quaternionic hyperbolic space $\hh{2}$, which matches
the second Hermitian form and we will use throughout this paper,
is the \new{Siegel domain $\mathfrak{S}$}. It is defined by
identifying points of $\mathfrak{S}$ with their horospherical
coordinates, $p=(\zeta,v,u) \in \H \times Im(\H) \times \R_+$. The boundary of $\mathfrak{S}$ is given by $(\H \times Im(\H)) \cup \{\infty\}$, where $\infty$ is a distinguished point at infinity.\\
Define a map $\psi:\overline{\mathfrak{S}}\rightarrow \P \H^{2,1}$
by
$$
\psi:(\zeta,v,u)\mapsto \left[\begin{matrix}
-|\zeta|^2-u+v
\\ \sqrt{2}\zeta \\ 1\end{matrix}\right] \hbox{ for }
(\zeta,v,u)\in\overline{\mathfrak{S}}-\{\infty\} \hbox{   ; }
\psi:\infty\mapsto\left[\begin{matrix} 1
\\ 0 \\ 0\end{matrix}\right].
$$
Then $\psi$ maps $\mathfrak{S}$ homeomorphically to the set of
points $p$ in $\P \H^{2,1}$ with $\langle p,p\rangle<0$, and maps
$\partial \mathfrak{S}$ homeomorphically to the set of points $p$
in $\P \H^{2,1}$ with $\langle p,p\rangle=0$. We write $\psi(\tilde{p})=p$.\\
There is a metric on $\mathfrak{S}$ is called the Bergman metric and the isometry group of $\hh{2}$ with respect to this metric is
\begin{align*}
\textbf{PSp}(2,1) &=\{[A]:A \in GL(3,\H), \langle p,p' \rangle = \langle Ap,Ap' \rangle, p,p' \in \H^{2,1}\}\\
&= \{[A]:A \in GL(3,\H), J=A^*JA\},
\end{align*}
where $[A]:\P\H^2 \rightarrow \P\H^2; x\H \mapsto (Ax)\H$ for $A \in \textbf{Sp}(2,1)$. As in \cite{Kim}, we adopt the convention that the action of $\textbf{Sp}(2,1)$ on $\hh{2}$ is left and the action of projectivization of $\textbf{Sp}(2,1)$ is right action. If we write
$$
A=\left[\begin{matrix} a & b & c \\ d & e & f \\ g & h & l
\end{matrix}\right] \in \textbf{PSp}(2,1),
$$
$A^{-1}$ is written as
$$
A^{-1}=\left[\begin{matrix} \overline{l} & \overline{f} & \overline{c} \\ \overline{h} & \overline{e} & \overline{b} \\ \overline{g} & \overline{d} & \overline{a}
\end{matrix}\right] \in \textbf{PSp}(2,1).
$$
Then, from $AA^{-1}=A^{-1}A=I$, we get the following identities.
$$
a\overline{l}+b\overline{h}+c\overline{g}=1, a\overline{f}+b\overline{e}+c\overline{e}=0, a\overline{c}+|b|^2+c\overline{a}=0,
$$
$$
d\overline{l}+e\overline{h}+f\overline{g}=0, d\overline{f}+|e|^2+f\overline{d}=1, d\overline{c}+e\overline{b}+f\overline{a}=0,
$$
$$
g\overline{l}+|h|^2+l\overline{g}=0, g\overline{f}+h\overline{e}+l\overline{d}=0, g\overline{c}+h\overline{b}+l\overline{a}=1,
$$
$$
\overline{l}a+\overline{f}d+\overline{c}g=1, \overline{l}b+\overline{f}e+\overline{c}h=0, \overline{l}c+|f|^2+\overline{c}l=0,
$$
$$
\overline{h}a+\overline{e}d+\overline{b}g=0, \overline{h}b+|e|^2+\overline{b}h=1, \overline{h}c+\overline{e}f+\overline{b}l=0,
$$
$$
\overline{g}a+|d|^2+\overline{a}g=0, \overline{g}b+\overline{d}e+\overline{a}h=0, \overline{g}c+\overline{d}f+\overline{a}l=1.
$$
Similar to the complex hyperbolic space, totally geodesic submanifolds of quaternionic hyperbolic space are quaternionic line, complex line, $\H_{\C}^2$, and $\H_{\R}^2$. Then we say a Kleinian group $G$ in the group of holomorphic isometries of quaternionic hyperbolic space is Fuchsian if $G$ leaves invariant one of the above submanifolds.\\
The following propositions are needed in the proof of the main theorem.
\begin{proposition}
For two nonzero quaternions $a$ and $b$, if $ab \in \bR$ and $b$ is purely imaginary, then $a$ is also purely imaginary.
\end{proposition}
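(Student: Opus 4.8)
The plan is to exploit the scalar/vector decomposition of quaternion multiplication. Write $a = a_0 + \vec{a}$, where $a_0 \in \R$ is the real part and $\vec{a}$ is the purely imaginary part, which I identify with a vector in $\R^3$. Since $b$ is purely imaginary I may write $b = \vec{b}$, and because $b$ is nonzero we have $\vec{b} \neq 0$; this last fact will be exactly what lets me conclude $a_0 = 0$ at the end.

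First I would expand the product using the standard identity $\vec{a}\,\vec{b} = -\langle \vec{a},\vec{b}\rangle + \vec{a}\times\vec{b}$ valid for purely imaginary quaternions, obtaining
$$
ab = (a_0 + \vec{a})\,\vec{b} = -\langle \vec{a}, \vec{b}\rangle + \bigl(a_0\,\vec{b} + \vec{a}\times\vec{b}\bigr).
$$
Thus the real part of $ab$ equals $-\langle \vec{a}, \vec{b}\rangle$, while its purely imaginary part equals $a_0\,\vec{b} + \vec{a}\times\vec{b}$. Next, since by hypothesis $ab \in \R$, the imaginary part must vanish, i.e.
$$
a_0\,\vec{b} + \vec{a}\times\vec{b} = 0.
$$

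The key observation is that these two summands are orthogonal: $a_0\,\vec{b}$ is parallel to $\vec{b}$, whereas $\vec{a}\times\vec{b}$ is perpendicular to $\vec{b}$. Two orthogonal vectors can sum to zero only if each of them is zero, so in particular $a_0\,\vec{b} = 0$; since $\vec{b} \neq 0$, this forces $a_0 = 0$, whence $a = \vec{a}$ is purely imaginary. There is no genuine obstacle in this argument: the only point demanding a little care is the orthogonality step, namely recognizing that the component of the imaginary part along $\vec{b}$ and the component perpendicular to it must vanish separately, together with the use of $b \neq 0$ to divide out $\vec{b}$ and isolate $a_0$.
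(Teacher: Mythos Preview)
Your argument is correct and is a genuinely different, cleaner route than the paper's. The paper writes out $ab$ in full $i,j,k$-coordinates, sets the three imaginary components to zero, assumes $a_0\neq 0$ for contradiction, and then eliminates variables: it solves one equation for $b_3$, substitutes to express $b_1$ in terms of $b_2$, and finally obtains $(a_0^2+a_1^2+a_2^2+a_3^2)b_2=0$, forcing $b_2=0$ and hence $b=0$. Your proof encodes exactly the same three equations as the single vector identity $a_0\vec{b}+\vec{a}\times\vec{b}=0$, and then the orthogonality of $a_0\vec{b}$ (parallel to $\vec{b}$) and $\vec{a}\times\vec{b}$ (perpendicular to $\vec{b}$) replaces the paper's elimination in one stroke. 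What you gain is transparency and brevity; what the paper's coordinate computation buys is that it requires no appeal to the dot/cross-product description of quaternion multiplication, only the defining relations of $i,j,k$.
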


\begin{proof}
Let $a=a_0+a_1i+a_2j+a_3k$ and $b=b_1i+b_2j+b_3k$, where $a_t, b_t \in \bR$ for $t=0,1,2,3$. Then our claim is that $a_0=0$. Suppose not. Since $ab=-(a_1b_1+a_2b_2+a_3b_3)+(a_0b_1+a_2b_3-a_3b_2)i+(a_0b_2+a_3b_1-a_1b_3)j+(a_0b_3+a_1b_2-a_2b_1)k$ is real,
$$
a_0b_1+a_2b_3-a_3b_2=0, a_0b_2+a_3b_1-a_1b_3=0, a_0b_3+a_1b_2-a_2b_1=0.
$$
Since $a_0\neq0$, from the third equation, $\displaystyle{b_3=\frac{a_2b_1-a_1b_2}{a_0}}$. Substituting it to the first equation, we get $\displaystyle{b_1=\frac{a_1a_2+a_0a_3}{a_0^2+a_2^2}b_2}$ and again substituting $b_3$ and $b_1$ to the second equation, we get $(a_0^2+a_1^2+a_2^2+a_3^2)b_2=0$. Since $a$ is nonzero, $b_2$ must be zero. However if $b_2=0$, $b_1=b_3=0$, that is $b=0$ and it contracts that $b$ is nonzero.
\end{proof}

\begin{proposition}
For two nonzero quaternions $a$ and $b$, if $ab$ and $a\overline{b}$ are all real, then either $a$ and $b$ are real or $a$ and $b$ are purely imaginary.
\end{proposition}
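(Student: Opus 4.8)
The plan is to reduce everything to the previous proposition by a simple linearization. Write $a = a_0 + \vec{a}$ and $b = b_0 + \vec{b}$ for the decompositions into real and imaginary parts, so that $\overline{b} = b_0 - \vec{b}$. The key observation is that adding the two hypotheses eliminates the imaginary part of $b$:
$$
ab + a\overline{b} = a(b + \overline{b}) = 2 b_0 \, a,
$$
and since both $ab$ and $a\overline{b}$ are real, the quaternion $2 b_0 \, a$ is real. This turns the pair of scalar conditions into a single dichotomy governed by whether $\mathrm{Re}(b) = b_0$ vanishes.

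First I would dispose of the case $b_0 \neq 0$. Here $a = (2 b_0)^{-1}(ab + a\overline{b})$ is a real scalar multiple of a real quaternion, hence $a \in \mathbb{R}$. Since $a$ is then a nonzero real number and $ab \in \mathbb{R}$, we conclude $b = a^{-1}(ab) \in \mathbb{R}$ as well, so both $a$ and $b$ are real.

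The remaining case is $b_0 = 0$, i.e. $b$ is purely imaginary. Since $b$ is nonzero and $ab \in \mathbb{R}$, the previous proposition applies directly and forces $a$ to be purely imaginary; thus both $a$ and $b$ are purely imaginary. These two cases exhaust all possibilities, giving the claimed dichotomy. I do not expect a genuine obstacle here: the only ingredient beyond the previous proposition is recognizing the identity $a(b + \overline{b}) = 2\,\mathrm{Re}(b)\,a$, which reduces the problem to the clean case split on $\mathrm{Re}(b)$.
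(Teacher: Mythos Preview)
Your proof is correct and follows essentially the same route as the paper: both arguments add the two hypotheses to deduce that $2b_0 a$ is real (the paper does this coordinate-by-coordinate, obtaining $a_1 b_0 = a_2 b_0 = a_3 b_0 = 0$), then split on whether $b_0 = 0$ and invoke the previous proposition in the purely imaginary case. Your coordinate-free formulation via $ab + a\overline{b} = a(b+\overline{b}) = 2b_0\,a$ is a bit cleaner but not a different idea.
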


\begin{proof}
Let $a=a_0+a_1i+a_2j+a_3k$ and $b=b_0+b_1i+b_2j+b_3k$, where $a_t, b_t \in \bR$ for $t=0,1,2,3$. Then the $i$-part of $ab$ and $a\overline{b}$ are $a_0b_1+a_1b_0+a_2b_3-a_3b_2$ and $-a_0b_1+a_1b_0-a_2b_3+a_3b_2$ respectively and since they must be zero, $a_1b_0=0$. By repeating for the $j$-part and $k$-part, we get $a_2b_0=a_3b_0=0$. \\
If $b_0=0$, then $b$ is purely imaginary and by above proposition, $a$ is also purely imaginary.\\
If $b_0 \neq 0$, then $a_1=a_2=a_3=0$, that is $a$ is real. Then, since $ab \in \bR$, $b$ is also real.
\end{proof}
\subsection{Quaternionic Cartan angular invariant}
The Cartan angular invariant is well-known invariant in complex hyperbolic geometry, but in quaternionic hyperbolic geometry, B.N.Apanasov and I.Kim first defined it in \cite{AK}. Here we give the definition and some properties which will be used in the proof of the main theorem.\\
The quaternionic Cartan angular invariant $\mathbb{A}_{\H}(p)$ of a triple $p=(p_1,p_2,p_3) \in (\hh{n} \cup \partial\hh{n})^3$ is the angle between the first coordinate line $\bR e_1=(\bR,0,0,0) \subset \bR^4 \cong \H$ and the radius vector of the quaternion equal to the Hermitian triple product $\displaystyle{\langle p_1,p_2,p_3 \rangle=\langle p_1,p_2 \rangle \langle p_2,p_3 \rangle \langle p_3,p_1 \rangle \in \H}$. Then $\mathbb{A}_{\H}(p)$ is independent of the choice of the lifts and $0 \leq \mathbb{A}_{\H}(p) \leq \pi/2$. Furthermore, $\mathbb{A}_{\H}(p)$ is invariant under permutations of the points $p_i$. An important fact on quaternionic Cartan angular invariant worth taking note of for our purposes is the following.
\begin{proposition}(Theorem 3.5 and 3.6 in \cite{AK})
A triple $p=(p_1,p_2,p_3) \in (\partial\hh{n})^3$ lies in the boundary of an $\H$-line if and only if $\mathbb{A}_{\H}(p)=\pi/2$, and lies in the same $\R$-circle if and only if $\mathbb{A}_{\H}(p)=0$.
\end{proposition}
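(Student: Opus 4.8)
The plan is to reduce both equivalences to a single normalization followed by the computation of one Hermitian triple product. First I would record the analytic content of the invariant: writing $w=\<p_1,p_2,p_3\>\in\H$ for the triple product of chosen lifts, the angle between the real axis $\bR e_1$ and the radius vector of $w$ satisfies $\cos\mathbb{A}_{\H}(p)=|\mathrm{Re}\,w|/|w|$, so $\mathbb{A}_{\H}(p)=\pi/2$ is equivalent to $w\in Im(\H)$ and $\mathbb{A}_{\H}(p)=0$ is equivalent to $w\in\bR$. Thus the proposition amounts to the two assertions: the triple lies on the boundary of an $\H$-line iff $w$ is purely imaginary, and it lies on a common $\R$-circle iff $w$ is real. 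I would then note that both conditions are well defined: under a change of lifts $p_i\mapsto p_i\lambda_i$ the triple product becomes a positive real multiple of $\overline{\lambda_1}w\lambda_1$, so the vanishing of $\mathrm{Re}\,w$ and of $\mathrm{Im}\,w$ are preserved; and since each $\<p_i,p_j\>$ is preserved by $A\in\textbf{Sp}(n,1)$, so is $w$. I am therefore free to move the configuration by any isometry.

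Next I would normalize. As $\textbf{Sp}(n,1)$ acts $2$-transitively on $\partial\hh{n}$, I send two of the three distinct points to $\infty=\psi(\infty)=[(1,0,0)^t]$ and to the origin $0=\psi(0,0,0)=[(0,0,1)^t]$, and write the third in horospherical coordinates, with lift $\psi(\zeta,v,0)=(-|\zeta|^2+v,\sqrt2\zeta,1)^t$, $\zeta\in\H$, $v\in Im(\H)$ (the general $n$ case is identical with $\zeta$ a vector). A direct computation with $J$ gives
$$
\<p_1,p_2\>=1,\quad \<p_2,p_3\>=-|\zeta|^2-v,\quad \<p_3,p_1\>=1,
$$
so that $\<p_1,p_2,p_3\>=-|\zeta|^2-v$, with real part $-|\zeta|^2$ and imaginary part $-v$. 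Everything is now read off this one quaternion: the triple product is purely imaginary exactly when $\zeta=0$, i.e. when $p_3$ has vanishing middle coordinate and so lies on $\partial L_0$ for the standard $\H$-line $L_0=\{[(a,0,b)^t]\}$ through $\infty$ and $0$; and it is real exactly when $v=0$, i.e. when the vertical coordinate of $p_3$ vanishes.

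For the $\H$-line equivalence this closes both directions. If the three points lie on the boundary of some $\H$-line $L$, I choose the normalizing isometry to carry $L$ to $L_0$ (using transitivity of $\textbf{Sp}(n,1)$ on $\H$-lines together with $2$-transitivity of the stabilizer of $L_0$ on $\partial L_0$), forcing $\zeta=0$ and hence $w\in Im(\H)$; conversely $w\in Im(\H)$ forces $\zeta=0$, placing all three points on $\partial L_0$. The forward direction of the $\R$-circle equivalence is equally direct: normalizing the common $\R$-circle to the standard real one $C_0=\{\psi(s,0,0):s\in\bR\}\cup\{\infty\}$, whose points admit real lifts $(-s^2,\sqrt2 s,1)^t$, makes every $\<p_i,p_j\>$ real and hence $w$ real.

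The step I expect to be the main obstacle is the converse of the $\R$-circle case, since $w\in\bR$ yields only the single scalar condition $v=0$, whereas an $\R$-circle is one–dimensional; I must exhibit, for each such $p_3$, an $\R$-circle through $\infty$, $0$ and $p_3$. For this I would analyze the stabilizer of $\{\infty,0\}$: the diagonal elements $\mathrm{diag}(\lambda,\mu,\lambda|\lambda|^{-2})$ with $|\mu|=1$ lie in $\textbf{Sp}(2,1)$ and act on the coordinate $\zeta$ by $\zeta\mapsto\mu\zeta\overline\lambda$, so restricting to unit $\lambda,\mu$ realizes the full rotation group, which is transitive on rays in $\H\cong\bR^4$. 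Hence the images of $C_0$ under these rotations are genuine $\R$-circles through $\infty$ and $0$ whose union is exactly the locus $\{v=0\}$, and every point with vanishing vertical coordinate shares an $\R$-circle with $\infty$ and $0$, completing the converse. Finally I would dispatch the degenerate triples — coinciding points make $w=0$ and the invariant undefined — by restricting the statement, as is implicit, to triples of pairwise distinct points.
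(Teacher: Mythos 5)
The paper does not actually prove this proposition: it is quoted, with attribution, from Theorems 3.5 and 3.6 of \cite{AK}, so there is no internal argument to compare yours against. Judged on its own terms, your proof takes the natural route and its computational core is correct: the reduction of the two statements to ``$w=\langle p_1,p_2\rangle\langle p_2,p_3\rangle\langle p_3,p_1\rangle$ purely imaginary'' and ``$w$ real (and nonzero)'', the normalization of two of the points to $\infty=[(1,0,0)^t]$ and $0=[(0,0,1)^t]$, the evaluation $\langle p_1,p_2\rangle=\langle p_3,p_1\rangle=1$ and $\langle p_2,p_3\rangle=-|\zeta|^2-v$, the identification of $\{\zeta=0\}$ with the boundary of the standard $\H$-line $L_0=\{[(a,0,b)^t]\}$, and the rotation argument $\zeta\mapsto\mu\zeta\overline{\lambda}$ producing an $\R$-circle through $\infty$, $0$ and any $\psi(\zeta,0,0)$ are all sound, as is your closing remark restricting to triples of distinct points.

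The one genuine defect is your justification of lift-independence, and it sits exactly where quaternionic non-commutativity bites. With the paper's conventions ($\langle p,q\rangle=q^*Jp$, hence $\langle p\lambda,q\mu\rangle=\overline{\mu}\langle p,q\rangle\lambda$), replacing $p_i$ by $p_i\lambda_i$ turns $w=ABC$, where $A=\langle p_1,p_2\rangle$, $B=\langle p_2,p_3\rangle$, $C=\langle p_3,p_1\rangle$, into
\begin{equation*}
(\overline{\lambda_2}A\lambda_1)(\overline{\lambda_3}B\lambda_2)(\overline{\lambda_1}C\lambda_3),
\end{equation*}
in which adjacent scalars do \emph{not} cancel; this is not a positive real multiple of $\overline{\lambda_1}w\lambda_1$, contrary to your claim. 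The angle can genuinely change under this ordering: for $p_1=\infty$, $p_2=0$, $p_3=\psi(1,0,0)$ one has $A=C=1$, $B=-1$, $w=-1$ (angle $0$), but rescaling with $\lambda_1=1$, $\lambda_2=(1+i)/\sqrt{2}$, $\lambda_3=(1+j)/\sqrt{2}$ yields the product $-\overline{\lambda_2}\,\overline{\lambda_3}\lambda_2\lambda_3=(-1+i-j-k)/2$, which makes angle $\pi/3$ with $\bR e_1$. The cancellation you want does hold for the reversed ordering,
\begin{equation*}
\langle p_3,p_1\rangle\langle p_2,p_3\rangle\langle p_1,p_2\rangle\;\longmapsto\;|\lambda_2|^2|\lambda_3|^2\,\overline{\lambda_1}\,\langle p_3,p_1\rangle\langle p_2,p_3\rangle\langle p_1,p_2\rangle\,\lambda_1,
\end{equation*}
equivalently for the convention of \cite{AK}, where the form is linear in its first slot with scalars acting on the left. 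Since lift-independence underwrites your whole strategy (after moving the points by an isometry you compute with specific, different lifts), you must either fix the ordering/convention as above or simply invoke the invariance statement already recorded in the paper's preliminaries; with that repair, the rest of your argument goes through. (In fairness, the faulty transformation law is inherited from the paper's own mixed conventions, which suffer the same defect; the formulation in \cite{AK} is the consistent one.)
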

\section{Proof of the main Theorem}
We may assume that a loxodromic element $A$ fixes $0$ and $\infty$, and $B$ is an arbitrary loxodromic element in $G$. In terms of matrices,
we can write $A=\left[\begin{matrix} \lambda\mu & 0 & 0 \\ 0 & \nu & 0 \\ 0 & 0 & \mu/\lambda \end{matrix} \right]$, and
$B=\left[\begin{matrix} a & b & c \\ d & e & f \\ g & h & l \end{matrix} \right]$, where $\mu,\nu \in \textbf{Sp}(1)$ and $\lambda >1$.(See \cite{Kim} or \cite{KP}) We claim that $\mu$ and $\nu$ are real number, so one of $\pm1$ for $\mu,\nu \in \textbf{Sp}(1)$.
Since $trA=\lambda\mu+\nu+\mu/\lambda \in \bR$, writing $\mu=\mu_1+\mu_2i+\mu_3j+\mu_4k$ and $\nu=\nu_1+\nu_2i+\nu_3j+\nu_4k$ for $\mu_t,\nu_t \in \bR$ where $t=1,2,3,4$, $trA=(\lambda+1/\lambda)(\mu_1+\mu_2i+\mu_3j+\mu_4k)+(\nu_1+\nu_2i+\nu_3j+\nu_4k) \in \bR$.
Hence $(\lambda+1/\lambda)\mu_t+\nu_t=0$ for $t=2,3,4$.\\
Furthermore,
\begin{align*}
tr(A^2) & =(\lambda^2+1/\lambda^2)\mu^2+\nu^2\\ &= (\lambda^2+1/\lambda^2)(\mu_1^2-\mu_2^2-\mu_3^2-\mu_4^2+2\mu_1\mu_2i+2\mu_1\mu_3j+2\mu_1\mu_4k)\\
&+(\nu_1^2-\nu_2^2-\nu_3^2-\nu_4^2+2\nu_1\nu_2i+2\nu_1\nu_3j+2\nu_1\nu_4k) \in \bR.
\end{align*}
Hence, $(\lambda^2+1/\lambda^2)\mu_1\mu_t+\nu_1\nu_t=\mu_t[(\lambda^2+1/\lambda^2)\mu_1-(\lambda+1/\lambda)\nu_1]=0$ for $t=2,3,4$.\\
If $\mu_2=\mu_3=\mu_4=0$, then, by above identity, $\nu_2=\nu_3=\nu_4=0$, so $\mu$ and $\nu$ are real number and $\mu,\nu \in \{1,-1\}$. If there is at least one nonzero number among $\{\mu_2,\mu_3,\mu_4\}$, $(\lambda^2+1/\lambda^2)\mu_1-(\lambda+1/\lambda)\nu_1=0$, so we can write $\displaystyle{\nu=\frac{\lambda^4+1}{\lambda(\lambda^2+1)}\mu_1-(\lambda+\frac{1}{\lambda})(\mu_2i+\mu_3j+\mu_4k)}$.\\
Now let's consider $A^4$. Then,
\begin{align*}
tr(A^4) & =(\lambda^4+1/\lambda^4)\mu^4+\nu^4\\ &= (\lambda^4+1/\lambda^4)(\mu_1+\mu_2i+\mu_3j+\mu_4k)^4\\ &+[\frac{\lambda^4+1}{\lambda(\lambda^2+1)}\mu_1-(\lambda+\frac{1}{\lambda})
(\mu_2i+\mu_3j+\mu_4k)]^4 \in \bR.
\end{align*}
By calculation, the $i$-part is
\begin{align*}
& 4\mu_1\mu_2(\lambda^4+\frac{1}{\lambda^4})(\mu_1^2-\mu_2^2-\mu_3^2-\mu_4^2)\\
& -4\frac{\lambda^4+1}{\lambda(\lambda^2+1)}\mu_1(\lambda+\frac{1}{\lambda})\mu_2\
[(\frac{\lambda^4+1}{\lambda(\lambda^2+1)}\mu_1)^2-(\lambda+\frac{1}{\lambda})^2(\mu_2^2+\mu_3^2+\mu_4^2)]\\
&= \frac{4\mu_1\mu_2}{\lambda^4}[(\lambda^8+1)(\mu_1^2-\mu_2^2-\mu_3^2-\mu_4^2)-\frac{(\lambda^4+1)^3}{(\lambda^2+1)^2}\mu_1^2+
(\lambda^4+1)(\lambda^2+1)^2(\mu_2^2+\mu_3^2+\mu_4^2)]\\
&= \frac{4\mu_1\mu_2}{\lambda^4}[\frac{2\lambda^2(\lambda^2-1)^2(\lambda^4+\lambda^2+1)}{(\lambda^2+1)^2}\mu_1^2
+2\lambda^2(\lambda^4+\lambda^2+1)(\mu_2^2+\mu_3^2+\mu_4^2)]\\
&= \frac{8(\lambda^4+\lambda^2+1)}{\lambda^2}[\frac{(\lambda^2-1)^2}{(\lambda^2+1)^2}\mu_1^2+\mu_2^2+\mu_3^2+\mu_4^2]\mu_1\mu_2,
\end{align*}
and which must be zero. Since $\lambda >1$, $\mu_1\mu_2=0$. By repeating the same argument for $j$-part and $k$-part, we get $\mu_1\mu_3=\mu_1\mu_4=0$. Since there is a nonzero number among $\{\mu_2,\mu_3,\mu_4\}$, $\mu_1=0$ and hence $\nu_1=0$. That is, $\mu$ and $\nu$ are purely imaginary. Then we can write $\mu=\mu_2i+\mu_3j+\mu_4k$ and $\nu=-(\lambda+\frac{1}{\lambda})(\mu_2i+\mu_3j+\mu_4k)$. It contradicts that $\mu,\nu \in \textbf{Sp}(1)$ for $\lambda >1$. Therefore $\mu,\nu=\pm1$ and we proved the claim. Hence $A=\left[\begin{matrix} \lambda & 0 & 0 \\ 0 & 1 & 0 \\ 0 & 0 & \frac{1}{\lambda} \end{matrix} \right]$ or $A=\left[\begin{matrix} \lambda & 0 & 0 \\ 0 & -1 & 0 \\ 0 & 0 & \frac{1}{\lambda} \end{matrix} \right]$.\\
Now let's assume that $A=\left[\begin{matrix} \lambda & 0 & 0 \\ 0 & 1 & 0 \\ 0 & 0 & \frac{1}{\lambda} \end{matrix} \right]$. In the case of $A=\left[\begin{matrix} \lambda & 0 & 0 \\ 0 & -1 & 0 \\ 0 & 0 & \frac{1}{\lambda} \end{matrix} \right]$, the proof is almost the same as in this case.\\
Since every element of $G$ has real trace,
$$\displaystyle{tr(B)=a+e+l, tr(AB)=\lambda a+e+l/\lambda, tr(A^{-1}B)=a/\lambda+e+\lambda l}$$ are all real. Solving for $a,e,$ and $l$, we get $a,e,l \in \bR$. This shows that every element of $G$ has real diagonal elements. Now let's consider the following two matrices of $B^2$ and $B'=BAB^{-1}$.

$$
B^2=\left[\begin{matrix} a^2+bd+cg & * & * \\ * & db+e^2+fh & * \\ * & * & gc+hf+l^2 \end{matrix} \right],
$$
$$
B'=BAB^{-1}=
\left[\begin{matrix} \lambda a\overline{l}+b\overline{h}+c\overline{g}/\lambda & \lambda a\overline{f}+b\overline{e}+c\overline{d}/\lambda & \lambda a\overline{c}+b\overline{b}+c\overline{a}/\lambda \\ \lambda d\overline{l}+e\overline{h}+f\overline{g}/\lambda & \lambda d\overline{f}+e\overline{e}+f\overline{d}/\lambda & \lambda d\overline{c}+e\overline{b}+f\overline{a}/\lambda \\ \lambda g\overline{l}+h\overline{h}+l\overline{g}/\lambda &\lambda g\overline{f}+h\overline{e}+l\overline{d}/\lambda & \lambda g\overline{c}+h\overline{b}+l\overline{a}/\lambda \end{matrix} \right].
$$
Since diagonal elements of $B^2$ are real and $a,e,l$ are real, we get $bd+cg, db+fh, gc+hf \in \bR$. Similarly, from $B'$, $b\overline{h}+c\overline{g}/\lambda, \lambda d\overline{f}+f\overline{d}/\lambda, \lambda g\overline{c}+h\overline{b} \in \bR$. By the identity $a\overline{l}+b\overline{h}+c\overline{g}=1$, we know that $b\overline{h}+c\overline{g}$ is real and combining it with $b\overline{h}+c\overline{g}/\lambda$ is real, we get $b\overline{h}$ and $c\overline{g}$ are real. Also, from $d\overline{f}+|e|^2+f\overline{d}=1$, $d\overline{f}+f\overline{d}$ is real and combining it with $\lambda d\overline{f}+f\overline{d}/\lambda$ is real, $d\overline{f}$ is also real.\\
By the way, the $(1,1)$-entry of $B^2AB^{-1}$ is
\begin{align*}
& \lambda a^2\overline{l}+ab\overline{h}+\frac{1}{\lambda}ac\overline{g}+\lambda bd\overline{l}+be\overline{h}+\frac{1}{\lambda}bf\overline{g}+\lambda cg\overline{l}+c|h|^2+\frac{1}{\lambda}cl\overline{g}\\ &= (\lambda a^2\overline{l}+ab\overline{h}+\frac{1}{\lambda}ac\overline{g}+eb\overline{h}+\frac{1}{\lambda}c\overline{g}l)
+\lambda(bd+cg)\overline{l}+\frac{1}{\lambda}bf\overline{g}+c|h|^2 \in \bR.
\end{align*}
Since $a,e,l,b\overline{h},c\overline{g}$, and $bd+cg$ are real, $\displaystyle{\frac{1}{\lambda}bf\overline{g}+c|h|^2 \in \bR}$.\\
From the identities $g\overline{l}+|h|^2+l\overline{g}=0$ and $d\overline{l}+e\overline{h}+f\overline{g}=0$,
\begin{align*}
c|h|^2+\frac{1}{\lambda}bf\overline{g} &=c(-g\overline{l}-l\overline{g})+\frac{1}{\lambda}b(-d\overline{l}-e\overline{h})\\
&= -(cg+\frac{1}{\lambda}bd)\overline{l}-lc\overline{g}-\frac{1}{\lambda}eb\overline{h}.
\end{align*}
In the last equation, we use the fact that $e$ and $l$ are real. Since $b\overline{h}$ and $c\overline{g}$ are real,
we get $\displaystyle{(cg+\frac{1}{\lambda}bd)l \in \bR}$.\\
If $l\neq0$, $cg+\frac{1}{\lambda}bd \in \bR$, and since $cg+bd \in \bR$, we obtain $cg,bd \in \bR$. Then, by Proposition 2.2, either $c$ and $g$ are real or $c$ and $g$ are purely imaginary.\\
If $l=0$, from the identities $\overline{l}c+|f|^2+\overline{c}l=0$ and $g\overline{l}+|h|^2+l\overline{g}=0$, $f=h=0$. Looking into the (3,3)-entry of $B^2$ and $B'$, $gc$ and $g\overline{c}$ are real. Then, by Proposition 2.2, either $c$ and $g$ are real or $c$ and $g$ are purely imaginary.\\
\textbf{Case I}: $c$ and $g$ are purely imaginary.\\
From the identity $\overline{g}a+|d|^2+\overline{a}g=0$, $d=0$ because $a$ is real and $g$ is purely imaginary. Similarly, from identities $\overline{a}c+|b|^2+\overline{c}a=0, \overline{l}c+|f|^2+\overline{c}l=0$, and $g\overline{l}+|h|^2+l\overline{g}=0$, we get $b=f=h=0$. Hence, $B=\left[\begin{matrix} a & 0 & c \\ 0 & e & 0 \\ g & 0 & l \end{matrix} \right]$, where $a,e,l$ are real and $c,g$ are purely imaginary. Then $A$ and $B$ leave invariant a quaternionic line $H$ of polar vector $\left[\begin{matrix} 0 \\ 1 \\ 0 \end{matrix} \right]$.\\
Now let $B^{*}=\left[\begin{matrix} a' & b' & c' \\ d' & e' & f' \\ g' & h' & l' \end{matrix} \right]$ be any other element of $G$. Then, $a',e',l'$ are real and diagonal elements of $B^{*}B$, which are $a'a+c'g, e'e$, and $g'c+l'l$, are also real. Hence $c'g$ and $g'c$ are real. Then, by Proposition 2.1, $c'$ and $g'$ are also purely imaginary, so $b'=d'=f'=h'=0$ as above. Therefore, we conclude that $G$ leaves invariant a quaternionic line $H$.\\
\textbf{Case II}: $c$ and $g$ are real.\\
Calculating the quaternionic Cartan angular invariant of three points $0,\infty,B(0)$, since $\langle 0,\infty,B(0) \rangle=\overline{l}c \in \bR$, $\textit{A}_{\mathbb{H}}(0,\infty,B(0))=0$. Hence, by Proposition 2.3, these three points lie in an $\bR$-circle, so we may normalize so that this $\bR$-circle is $\mathbb{H}^2_{\bR}$. Then $B(0)=\left[\begin{matrix} c \\ f \\ l \end{matrix} \right] \in \mathbb{H}^2_{\bR}$, that is, $f$ is real. We notice that $f$ cannot be zero, because if $f=0$, from the identity $\overline{l}c+|f|^2+\overline{c}l=0$, $cl$ must be zero, which means that $B(0)$ is either $0$ or $\infty$, and it contracts that $G$ is non-elementary. Since $f$ is nonzero and real, $d$ is also real for $d\overline{f} \in \bR$. Then, similarly, $d$ cannot be zero by the identity $\overline{g}a+|d|^2+\overline{a}g=0$. Since $bd+cg \in \bR$ and $gc+hf \in \bR$, $b$ and $h$ are also real. Therefore, we conclude that $B=\left[\begin{matrix} a & b & c \\ d & e & f \\ g & h & l \end{matrix} \right] \in \textbf{SO}(2,1)$, hence $\langle a,b \rangle \subset \textbf{SO}(2,1)$.\\
Now let $B_{*}=\left[\begin{matrix} a' & b' & c' \\ d' & e' & f' \\ g' & h' & l' \end{matrix} \right]$ be another element of $G$. Then, diagonal elements $a',e'$, and $l'$ are real. Also, diagonal elements of $BB_{*}$ to be real, $bd'+cg', db'+fh', gc'+hf' \in \bR$. Similarly, if we consider diagonal elements of $BAB_*=\left[\begin{matrix} \lambda aa'+bd'+\frac{1}{\lambda}cg' & * & * \\ * & \lambda db'+ee'+\frac{1}{\lambda}fh' & * \\ * & * & \lambda gc'+hf'+\frac{1}{\lambda}ll' \end{matrix} \right]$, we know that $\displaystyle{bd'+\frac{1}{\lambda}cg',\lambda db'+\frac{1}{\lambda}fh',\lambda gc'+hf' \in \bR}$. Since $\lambda>1$, $bd',cg',db',fh',gc',hf' \in \bR$. Hence $b'$ and $h'$ are real because $f$ and $d$ are nonzero.\\
Furthermore, since $d$ and $f$ are nonzero, from identities $\overline{l}c+|f|^2+\overline{c}l=0$, and  $\overline{g}a+|d|^2+\overline{a}g=0$, $c$ and $g$ cannot be zero. Hence, $c'$ and $g'$ are real because $cg'$ and $gc'$ are real. Again, looking into diagonal elements of $B^{-1}B_*$, we can find that $fd'$ and $df'$ are real. Since $d$ and $f$ are nonzero real, $d'$ and $f'$ are real.\\
Therefore, $B_*$ is also in $\textbf{SO}(2,1)$ and it shows that every element of $G$ preserves $\mathbb{H}^2_{\bR}$.


\end{document}